\newcommand{\newsection}[1]{\setcounter{equation}{0} \section{#1}}
\newcommand{\vp}{\varphi}
\newcommand{\clb}{\mathcal{B}}
\newcommand{\cld}{\mathcal{D}}
\newcommand{\cle}{\mathcal{E}}
\newcommand{\clh}{\mathcal{H}}
\newcommand{\clk}{\mathcal{K}}
\newcommand{\cll}{\mathcal{L}}
\newcommand{\clm}{\mathcal{M}}
\newcommand{\clo}{\mathcal{O}}
\newcommand{\clw}{\mathcal{W}}
\newcommand{\raro}{\rightarrow}
\newcommand{\be}{\begin{equation}}
\newcommand{\ee}{\end{equation}}
\newcommand{\ben}{\begin{eqnarray*}}
\newcommand{\een}{\end{eqnarray*}}
\newcommand{\NI}{\noindent}
\newcommand{\bi}{\begin{itemize}}
\newcommand{\ei}{\end{itemize}}
\newcommand{\D}{\mathbb{D}}
\newtheorem{Theorem}{\sc Theorem}[section]
\newtheorem{Lemma}[Theorem]{\sc Lemma}
\newtheorem{Proposition}[Theorem]{\sc Proposition}
\newtheorem{Corollary}[Theorem]{\sc Corollary}
\newtheorem{Example}[Theorem]{\sc Example}
\newtheorem{Remark}[Theorem]{\sc Remark}
\newtheorem{Note}[Theorem]{\sc Note}
\newtheorem{Question}{\sc Question}
\newtheorem{ass}[Theorem]{\sc Assumption}
\theoremstyle{definition}
\newtheorem{Definition}[Theorem]{\sc Definition}
\newcommand{\bt}{\begin{Theorem}}
\def\beginlem{\begin{Lemma}}
\def\beginprop{\begin{Proposition}}
\def\begincor{\begin{Corollary}}
\def\begindef{\begin{Definition}}
\def\beginexamp{\begin{Example}}
\def\beginrem{\begin{Remark}}
\def\beginq{\begin{Question}}
\def\beginass{\begin{ass}}
\def\beginnote{\begin{Note}}
\newcommand{\et}{\end{Theorem}}
\def\endlem{\end{Lemma}}
\def\endprop{\end{Proposition}}
\def\endcor{\end{Corollary}}
\def\enddef{\end{Definition}}
\def\endexamp{\end{Example}}
\def\endrem{\end{Remark}}
\def\endq{\end{Question}}
\def\endass{\end{ass}}
\def\endnote{\end{Note}}
\newcommand{\cls}{\mathcal{S}}
\newcommand{\z}{\bm{z}}
\newcommand{\w}{\bm{w}}
\newcommand{\Z}{\mathbb{Z}_+^n}
\begin{document}

\title[contractively embedded invariant subspaces]
{contractively embedded invariant subspaces}

\author[Gorai]{Sushil Gorai}
\address{Department of Mathematics and Statistics, Indian Institute of Science Education and Research Kolkata, Mohanpur 741 246, West
Bengal, India} \email{sushil.gorai@iiserkol.ac.in}

\author[Sarkar]{Jaydeb Sarkar}
\address{Indian Statistical Institute, Statistics and Mathematics Unit, 8th Mile, Mysore Road, Bangalore,
560059, India} \email{jay@isibang.ac.in, jaydeb@gmail.com}


\keywords{Invariant subspaces, de Branges-Rovnyak spaces, Hardy
space, reproducing kernel Hilbert spaces, multipliers, bounded
analytic functions}

\dedicatory{To Joseph A. Ball with gratitude and admiration}

\subjclass[2000]{46C07, 46E22, 47A13, 47A15, 47B32}

\begin{abstract}
This paper focuses on representations of contractively embedded
invariant subspaces in several variables. We present a version of
the de Branges theorem for $n$-tuples of multiplication operators by
the coordinate functions on analytic reproducing kernel Hilbert
spaces over the unit ball $\mathbb{B}^n$ and the Hardy space over
the unit polydics $\mathbb{D}^n$ in $\mathbb{C}^n$.
\end{abstract}

\maketitle

\section{Introduction}

The theory of contractively embedded invariant and co-invariant (not
necessarily closed) subspaces for the shift operator on the Hardy
space was initiated by L. de Branges. This theory was laid out more
systematically in the mid 60's by de Branges and Rovnyak (see the
monograph by de Branges and Rovnyak \cite{dR}). The de Branges and
Rovnyak's approach to the theory of contractively embedded invariant
and co-invariant subspaces for shift operators on reproducing kernel
Hilbert spaces has proved very fruitful in analysing operator and
function theoretic problems. As is well known, it was this theory
that led de Branges to the affirmative solution of the Bieberbach
conjecture \cite{LdB}.

The purpose of this note is to analyze the structure of
contractively embedded (not necessarily closed) invariant subspaces
for tuples of multiplication operators by the coordinate functions
on reproducing kernel Hilbert spaces in several variables. Recall
that a Hilbert space $\clh$ is said to be contractively embedded in
a Hilbert space $\clk$ if $\clh$ is a vector subspace of $\clk$ and
the inclusion map $i_{\clh} : \clh \hookrightarrow \clk$ is a
contraction. Obviously, the latter condition is equivalent to
\[
\|f\|_{\clk} \leq \|f\|_{\clh},
\]
for all $f \in \clh$, where $\|\cdot\|_{\clh}$ and
$\|\cdot\|_{\clk}$ denotes the norms on $\clh$ and $\clk$,
respectively. It follows in particular that a closed subspace of a
Hilbert space is contractively (or isometrically, as an embedding)
embedded in the larger Hilbert space.

Now let $\clk$ be a Hilbert space, and let $\clh$ be a Hilbert space
that is contractively embedded in $\clk$. Let $(T_1, \ldots, T_n)$
be an $n$-tuple of commuting bounded linear operators on $\clk$,
that is,
\[
T_i T_j = T_j T_i,
\]
for all $i, j = 1, \ldots, n$. Let $\clh$ be an \textit{invariant
subspace} for $(T_1, \ldots, T_n)$, that is,
\[
T_i \clh \subseteq \clh,
\]
for all $i = 1, \ldots, n$. Suppose that $T_i|_{\clh}$ is bounded on
$\clh$, that is, there exists $M > 0$ such that
\[
\|T_i f\|_{\clh} \leq M \|f\|_{\clh},
\]
for all $f \in \clh$ and $i = 1, \ldots, n$. Then clearly
$(T_1|_{\clh}, \ldots, T_n|_{\clh})$ is an $n$-tuple of commuting
bounded linear operators on $\clh$. The question of interest here is
to represent $\clh$ in terms of the (algebraic or analytic
properties of the) tuple $(T_1, \ldots, T_n)$.

We pause now to examine one concrete example of the above invariant
subspace problem. Following standard notation, let $H^2(\D)$ denote
the Hardy space over the unit disc $\D$. Let $M_z$ on $H^2(\D)$ be
the multiplication operator by the independent variable $z$, that
is,
\[
(M_z f)(w) = w f(w),
\]
for all $f \in H^2(\D)$ and $w \in \D$. It follows that $M_z$ is a
shift of multiplicity one (see Section 3). Let $\clh$ be a Hilbert
space contractively embedded in $H^2(\D)$, and let $M_z \clh
\subseteq \clh$. If $M_z|_{\clh}$ is an isometry on $\clh$, then the
celebrated theorem of de Branges says that there is a function $\vp
\in H^\infty(\D)$ such that $\|\vp\|_{\infty} \leq 1$ and
\[
\clh = \vp H^2(\D).
\]
Recall that $H^\infty(\D)$ is the Banach algebra of all bounded
analytic functions on the unit disc $\D$ equipped with the supremum
norm \cite{NF}. Here the norm on $\clh$ is the range norm induced by
the injective multiplier $M_{\vp}$ on $H^2(\D)$, that is,
\[
\|\vp f\|_{\clh} = \|f\|_{H^2(\D)},
\]
for all $f \in H^2(\D)$ (cf. Section 3 in \cite{Don1} and Theorems
3.5 and 3.7 in \cite{T}). In this context, we refer the reader to
the beautiful survey by Ball and Bolotnikov \cite{BB-sur} on de
Branges-Rovnyak spaces in both one and several variables, the
monographs by Fricain and Mashreghi \cite{FM}, Sarason \cite{Sar,
Don1}, Niko\'{l}ski\u{i} and Vasyunin \cite{NV}, Sand \cite{Sand}
and Timotin \cite{T}. Also see Singh and Thukral \cite{DV} and Sahni
and Singh \cite{SD}. Another important and relevant piece of work is
due to Ball and Kriete \cite{BK} and Crofoot \cite{Cro}. The reader
can also see the papers by Chevrot, Guillot and Ransford \cite{CGR},
Costara and Ransford \cite{CR} and Sarason \cite{Don} in connection
with the de Branges-Rovnyak models and (generalized) Dirichlet
spaces.

A natural question is now to ask for a similar representations of
contractively embedded invariant subspaces for (tuples of)
multiplication operators by the coordinate function(s) within the
framework of analytic reproducing kernel Hilbert spaces \cite{Aro}
in one and several variables.

In Theorems \ref{th:gen-inc}, \ref{th-MT2} and \ref{th-MT3}, we
present a solution for this problem in the setting of commuting row
contractions on Hilbert spaces and analytic Hilbert spaces (see the
definition in Section 2) and tuples of shift operators on
vector-valued Hardy spaces over the unit polydisc $\D^n$ in
$\mathbb{C}^n$, respectively.

The proofs of Theorems \ref{th:gen-inc} and \ref{th-MT2} involve a
careful adaptation of techniques used in \cite{JS2} and \cite{JS1}.
Whereas the setting and the proof of our invariant subspace theorem
for the shift on the Hardy space over the polydisc, Theorem
\ref{th-MT3}, is closely related to the recently initiated work
\cite{MMSS} on the classification of (closed) invariant subspace
problem for the Hardy space in several variables.

A somewhat more intriguing and complex problem is the classification
of contractively embedded invariant subspaces which admit a
co-invariant complemented subspace. Note that an important aspect of
the de Branges-Rovnyak theory is the complementations of invariant
subspaces of the Hardy space: A contractively embedded invariant
subspace for $M_z$ on $H^2(\D)$ is complemented in $H^2(\D)$ by an
$M_z^*$-invariant (not necessarily closed) subspace (cf. Subsection
3.4 in \cite{T}). We postpone the general discussion on complemented
invariant subspaces for a future paper and refer the reader to the
papers by Ball, Bolotnikov and Fang \cite{BBF2, BBF1, BBF}, Ball,
Bolotnikov and ter Horst \cite{BBS1, BBS} and Benhida and Timotin
\cite{BT} for related results in the setting of Drury-Arveson space
\cite{Ar}.

For the remainder, we adapt the following notations: $\z$ denotes
the element $(z_1, \ldots, z_n)$ in $\mathbb{C}^n$, $z_i \in
\mathbb{C}$, $\D^n = \{\z \in \mathbb{C}^n: |z_i| < 1, i = 1,
\ldots, n\}$, $\mathbb{B}^n = \{\z \in \mathbb{C}^n:
\|\z\|_{\mathbb{C}^n} < 1\}$ and
\[
\mathbb{Z}_+^n = \{\bm{k} = (k_1, \ldots, k_n) : k_i \in
\mathbb{Z}_+, i = 1, \ldots, n\}.
\]
Also for each multi-index $\bm{k} \in \mathbb{Z}_+^n$, commuting
tuple $T = (T_1, \ldots, T_n)$ on a Hilbert space $\clh$, and $\z
\in \mathbb{C}^n$ we denote
\[
T^{\bm{k}} = T_1^{k_1} \cdots T_n^{k_n} \quad \text{and} \quad
\z^{\bm{k}} = z_1^{k_1} \cdots z_n^{k_n}.
\]

\newsection{Row contractions and reproducing kernel Hilbert spaces}

Let $n$ be a natural number, and let $\clh$ be a Hilbert space. A
commuting tuple of bounded linear operators $(T_1, \ldots, T_n)$
acting on $\clh$ is called a \textit{row contraction} if the row
operator $(T_1, \ldots, T_n) : \clh^n \raro \clh$ defined by
\[
(T_1, \ldots, T_n) \begin{bmatrix} h_1 \\ \vdots \\ h_n
\end{bmatrix} = T_1 h_1 + \cdots + T_n h_n,
\]
for all $h_1, \ldots, h_n \in \clh$, is a contraction. Evidently,
the tuple $(T_1, \ldots, T_n)$ is a row contraction if and only if
\[
\|T_1 h_1 + \cdots + T_n h_n \|^2 \leq \| h_1\|^2 + \cdots +
\|h_n\|^2,
\]
for all $h_1, \ldots, h_n \in \clh$, or equivalently if
\[
\sum_{i=1}^n T_i T_i^* \leq I_{\clh}.
\]

For a row contraction $T = (T_1, \ldots, T_n)$ on a Hilbert space
$\clh$, we define the \textit{defect operator} and the
\textit{defect space} of $T$ as
\[
D_T = (I_{\clh} - \sum_{i=1}^n T_i T^*_i )^{\frac{1}{2}},
\]
and
\[
\cld_{T} = \overline{\text{ran~}} D_T
\]
respectively. Consider the map $P_T : \clb(\clh) \rightarrow
\clb(\clh)$ defined by
\[
P_{T} (X) = \sum_{i=1}^{n} T_i X T^*_i,
\]
for all $X \in \clb(\clh)$. Clearly, $P_T$ is a completely positive
map. Moreover, since
\[
I_{\clh} \geq P_{T} (I_{\clh}) \geq P^2_{T} (I_{\clh}) \geq \cdots
\geq 0,
\]
it follows that
\[
P_{\infty}(T) = \mbox{SOT} - \mathop{\lim}_{m \raro \infty} P_{T}^m
(I_{\clh}),
\]
exists and $0 \leq P_{\infty}(T) \leq I_{\clh}$. We say that $T$ is
a \textit{pure row contraction} if
\[
P_{\infty}(T) = 0.
\]
Standard examples of pure row contractions are the multiplication
operator tuples by the coordinate functions on the Drury-Arveson
space, the Hardy space, the Bergman space and the weighted Bergman
spaces over $\mathbb{B}^n$. In fact, for each $\lambda \geq 1$, the
multiplication operator tuple $(M_{z_1}, \ldots, M_{z_n})$ on the
reproducing kernel Hilbert space $\clh_{K_{\lambda}}$ is a pure row
contraction, where
\begin{equation}\label{eq-K}
K_\lambda(\z, \w) = (1 - \sum_{i=1}^n z_i \bar{w}_i)^{-\lambda},
\end{equation}
for all $\z, \w \in \mathbb{B}^n$ (cf. Proposition 4.1 in
\cite{JS2}). Note that the Drury-Arveson space $H^2_n$, the Hardy
space $H^2(\mathbb{B}^n)$, the Bergman space $L^2_a(\mathbb{B}^n)$,
and the weighted Bergman space $L^2_{a, \alpha}(\mathbb{B}^n)$, with
$\alpha > 0$, are reproducing kernel Hilbert spaces with kernel
$K_{\lambda}$ for $\lambda = 1, n$, $n+1$ and $n + 1 + \alpha$,
respectively.

Let $\cle$ be a Hilbert space. We identify the Hilbert tensor
product $H^2_n \otimes \cle$ with the $\cle$-valued Drury-Arveson
space $H^2_n(\cle)$, or the $\cle$-valued reproducing kernel Hilbert
space with kernel function
\[
\mathbb{B}^n \times \mathbb{B}^n \ni (\z, \w) \mapsto (1 -
\sum_{i=1}^n z_i \bar{w}_i)^{-1} I_{\cle}.
\]
Then
\[
H^2_n(\cle) = \{ f \in \clo (\mathbb{B}^n, \cle): f(z) =
\sum_{\bm{k} \in \mathbb{Z}^n_+} a_{\bm{k}} z^{\bm{k}}, a_{\bm{k}}
\in \cle, \|f\|^2 : = \sum_{\bm{k} \in \mathbb{Z}_+^n} \frac{ \|
a_{\bm{k}} \|^2}{\gamma_{\bm{k}}} < \infty \},
\]
where $\gamma_{\bm{k}} = \frac{(k_1 + \cdots + k_n)!}{k_1 ! \cdots
k_n!}$ are the multinomial coefficients, $\bm{k} \in \mathbb{Z}_+^n$
(cf. \cite{Ar} and \cite{JL}).

Now let $\clk$ be a Hilbert space, and let $\clh$ be a Hilbert space
that is contractively embedded in $\clk$. Let $T = (T_1, \ldots,
T_n)$ be a pure row contraction on $\clk$. Let
\[
T_j \clh \subseteq \clh,
\]
and let
\[
R_j = T_j|_{\clh},
\]
for all $j = 1, \ldots, n$. Suppose that $R = (R_1, \ldots, R_n)$ is
a row contraction on $\clh$, that is,
\[
\|\mathop{\sum}_{i=1}^n R_i h_i\|^2_{\clh} = \|\mathop{\sum}_{i=1}^n
T_i h_i\|^2_{\clh} \leq \mathop{\sum}_{i=1}^n \|h_i\|^2_{\clh},
\]
for all $h_1, \ldots, h_n \in \clh$. First we claim that $(R_1,
\ldots, R_n)$ is a pure row contraction. Indeed, observe that
\[
i_{\clh} R_j = T_j i_{\clh},
\]
for all $j = 1, \ldots, n$. Then
\[
i_{\clh} R^{\bm{k}} = T^{\bm{k}} i_{\clh},
\]
and so
\[
R^{*\bm{k}} i_{\clh}^* = i_{\clh}^* T^{*\bm{k}},
\]
for all $\bm{k} \in \Z$. This yields
\[
i_{\clh} R^{\bm{k}} R^{*\bm{k}} i_{\clh}^* = T^{\bm{k}} i_{\clh}
i_{\clh}^* T^{*\bm{k}},
\]
for all $\bm{k} \in \Z$, and hence
\[
i_{\clh} P^m_R(I_{\clh}) i_{\clh}^* = P^m_T(i_{\clh} i_{\clh}^*),
\]
for each $m \geq 0$. Since $P^m_T : \clb(\clk) \raro \clb(\clk)$ is
a (completely) positive map and
\[
i_{\clh} i_{\clh}^* \leq I_{\clk},
\]
(recall that $i_{\clh} : \clh \hookrightarrow \clk$ is a
contraction) we obtain that
\[
P^m_T(i_{\clh} i_{\clh}^*) \leq P^m_T(I_{\clk}),
\]
and hence
\[
i_{\clh} P^m_R(I_{\clh}) i_{\clh}^* \leq P^m_T(I_{\clk}),
\]
for all $m \geq 0$. Now for $f \in \clk$ and $m \geq 0$, we compute
\[
\begin{split}
\| P^m_R(I_{\clh})^{\frac{1}{2}} i_{\clh}^* f\|^2_{\clh} & = \langle
P^m_R(I_{\clh}) i_{\clh}^* f, i_{\clh}^* f \rangle_{\clh}
\\
& = \langle i_{\clh} P^m_R(I_{\clh}) i_{\clh}^* f, f \rangle_{\clk}
\\
& \leq \langle P^m_T(I_{\clk}) f, f \rangle_{\clk}.
\end{split}
\]
Since $(T_1, \ldots, T_n)$ is a pure row contraction we see that
\[
\lim_{m \raro \infty} \|P^m_R(I_{\clh})^{\frac{1}{2}} i_{\clh}^*
f\|_{\clh} = 0,
\]
for all $f \in \clk$. On the other hand, since $i_{\clh}$ is
one-to-one we see that $i_{\clh}^* : \clk \raro \clh$ has dense
range, and hence by continuity
\[
SOT - \lim_{m \raro \infty} P^m_R(I_{\clh})^{\frac{1}{2}} = 0.
\]
Since the sequence of positive operators $\{P^m_R(I_{\clh})\}_{m\geq
0}$ is uniformly bounded (by $\|I_{\clh}\| = 1$) we obtain that
\[
SOT - \lim_{m \raro \infty} P^m_R(I_{\clh}) = 0,
\]
that is, $(R_1, \ldots, R_n)$ on $\clh$ is a pure row contraction.

At this point we pause to recall the dilation result due to Jewell
and Lubin \cite{JL} and Muller and Vasilescu \cite{MV} (also see
Arveson \cite{Ar}) which says that a pure row contraction is jointly
unitarily equivalent to the compression of the tuple of
multiplication operators by the coordinate functions $\{z_1, \ldots,
z_n\}$ on a vector-valued Drury-Arveson space to a joint
co-invariant subspace. In other words, the multiplication operator
tuple $(M_{z_1}, \ldots, M_{z_n})$  on the Drury-Arveson space plays
the role of the model pure row contraction. We state this more
formally as follows (see Theorem 3.1 \cite{JS2} for a proof):

\begin{Theorem}\label{DA-pure}
Let $\cll$ be a Hilbert space, and let $X = (X_1, \ldots, X_n)$ be a
pure row contraction on $\cll$. Then there exists a co-isometry
$\Pi_X : H^2_n(\cld_X) \raro \cll$ such that
\[
\Pi_X M_{z_j} = X_j \Pi_X,
\]
for all $j = 1, \ldots, n$.
\end{Theorem}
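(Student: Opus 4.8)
The plan is to construct $\Pi_X$ by writing down its adjoint explicitly as a power series with operator coefficients built from $X^{*\bm{k}}$ and $D_X$. The crucial preliminary step is the \emph{fundamental identity}
\[
\sum_{\bm{k} \in \Z} \gamma_{\bm{k}}\, X^{\bm{k}} D_X^2 X^{*\bm{k}} = I_{\cll} \qquad (\text{SOT}),
\]
which I would prove by telescoping. Since $P_X^{m+1}(I_\cll) = P_X^m(P_X(I_\cll))$ and $I_\cll - P_X(I_\cll) = D_X^2$, linearity of $P_X^m$ gives $P_X^m(I_\cll) - P_X^{m+1}(I_\cll) = P_X^m(D_X^2)$; and using commutativity of the $X_i$'s one checks that $P_X^m(D_X^2) = \sum_{|\bm{k}|=m} \gamma_{\bm{k}}\, X^{\bm{k}} D_X^2 X^{*\bm{k}}$, the multinomial coefficient $\gamma_{\bm{k}}$ counting the words $(i_1,\dots,i_m)$ in $\{1,\dots,n\}$ with prescribed letter-multiplicities. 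Summing $m = 0, \dots, N$ telescopes to $I_\cll - P_X^{N+1}(I_\cll) = \sum_{|\bm{k}|\le N}\gamma_{\bm{k}} X^{\bm{k}} D_X^2 X^{*\bm{k}}$, and the hypothesis that $X$ is pure forces the left-hand side to $I_\cll$ strongly.

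Next I would define $V \colon \cll \to H^2_n(\cld_X)$ by
\[
(Vh)(\z) = \sum_{\bm{k}\in\Z} \gamma_{\bm{k}}\, (D_X X^{*\bm{k}} h)\, \z^{\bm{k}}.
\]
Each coefficient lies in $\operatorname{ran} D_X \subseteq \cld_X$, and by the description of the $\cld_X$-valued Drury--Arveson norm the partial sums satisfy
\[
\Big\| \sum_{|\bm{k}|\le N} \gamma_{\bm{k}}(D_X X^{*\bm{k}}h)\z^{\bm{k}}\Big\|^2_{H^2_n(\cld_X)} = \sum_{|\bm{k}|\le N} \gamma_{\bm{k}} \|D_X X^{*\bm{k}} h\|^2 = \Big\langle \sum_{|\bm{k}|\le N}\gamma_{\bm{k}} X^{\bm{k}}D_X^2 X^{*\bm{k}} h,\, h\Big\rangle,
\]
which increases to $\|h\|^2$ by the fundamental identity. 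Hence the series converges in $H^2_n(\cld_X)$, $V$ is a well-defined \emph{isometry}, and I set $\Pi_X := V^*$, a co-isometry from $H^2_n(\cld_X)$ onto $\cll$.

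It remains to verify the intertwining relation, which I would establish in the equivalent adjoint form $M_{z_j}^* V = V X_j^*$ by pairing against the total set of monomials $\z^{\bm{l}}\otimes \xi$, with $\bm{l}\in\Z$ and $\xi\in\cld_X$. On one hand, $\langle M_{z_j}^* V h, \z^{\bm{l}}\otimes\xi\rangle = \langle V h, \z^{\bm{l}+\bm{e}_j}\otimes\xi\rangle$, and since the coefficient of $\z^{\bm{l}+\bm{e}_j}$ in $Vh$ is $\gamma_{\bm{l}+\bm{e}_j} D_X X^{*(\bm{l}+\bm{e}_j)} h$ while $\|\z^{\bm{l}+\bm{e}_j}\otimes\xi\|^2 = \|\xi\|^2/\gamma_{\bm{l}+\bm{e}_j}$, this pairing equals $\langle D_X X^{*(\bm{l}+\bm{e}_j)}h, \xi\rangle$, the multinomial factor cancelling. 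On the other hand, using $X^{*(\bm{l}+\bm{e}_j)} = X^{*\bm{l}} X_j^*$, one gets $\langle V X_j^* h, \z^{\bm{l}}\otimes\xi\rangle = \langle D_X X^{*\bm{l}} X_j^* h, \xi\rangle = \langle D_X X^{*(\bm{l}+\bm{e}_j)}h, \xi\rangle$. The two agree for all $\bm{l}$ and $\xi$, so $M_{z_j}^* V = VX_j^*$, equivalently $\Pi_X M_{z_j} = X_j \Pi_X$, for every $j$.

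The only genuinely delicate points are bookkeeping ones: getting the multinomial coefficients right in the expansion of $P_X^m(D_X^2)$, which is where commutativity of the tuple is used, and justifying both the strong-operator passage to the limit and the interchange of limit and inner product in the norm computation for $V$; the uniform bound $P_X^m(I_\cll)\le I_\cll$ makes the latter routine. Once the fundamental identity is in hand, everything else is a direct computation.
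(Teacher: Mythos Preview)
Your argument is correct: the telescoping identity $I_{\cll}-P_X^{N+1}(I_{\cll})=\sum_{|\bm{k}|\le N}\gamma_{\bm{k}}X^{\bm{k}}D_X^2X^{*\bm{k}}$ combined with purity gives the fundamental identity, the map $V$ is then a well-defined isometry by the norm formula for $H^2_n(\cld_X)$, and the intertwining check against monomials is accurate (the multinomial factors cancel exactly as you say). Note, however, that the paper does not supply its own proof of this theorem; it merely quotes the result and refers to Theorem~3.1 in \cite{JS2} (and to \cite{Ar}, \cite{JL}, \cite{MV}) for the argument. Your construction is precisely the standard Arveson--M\"uller--Vasilescu dilation that those references carry out, so there is no genuine divergence of approach to report.
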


Therefore, by the above dilation theorem applied to the pure row
contraction $(R_1, \ldots, R_n)$, we get a co-isometry $\Pi_R :
H^2_n(\cld_R) \raro \clh$ such that
\[
\Pi_R M_{z_j} = R_j \Pi_R,
\]
for all $j = 1, \ldots, n$. Let
\[
\Pi = i_{\clh} \circ \Pi_{R}.
\]
It follows that $\Pi : H^2_n(\cld_R) \raro \clk$ is a contraction
and
\[
\mbox{ran~} \Pi = \clh.
\]
Moreover, since $i_{\clh} R_j = T_j i_{\clh}$, we have that
\[
\Pi M_{z_j} = T_j \Pi,
\]
for all $j = 1, \ldots, n$. We summarize these results as follows:

\begin{Theorem}\label{th:gen-inc}
Let $\clk$ be a Hilbert space, and let $(T_1, \ldots, T_n)$ be a
pure row contraction on $\clk$. Let $\clh$ be a Hilbert space that
is contractively embedded in $\clk$. Let $T_j \clh \subseteq \clh$
and
\[
R_j = T_j|_{\clh},
\]
for all $j = 1, \ldots, n$. Let $(R_1, \ldots, R_n)$ be a row
contraction on $\clh$. Then $(R_1, \ldots, R_n)$ is a pure row
contraction and there exist a Hilbert space $\cle_*$ and a
contraction $\Pi : H^2_n({\cle_*}) \raro \clk$ such that
\[
\Pi M_{z_j} = T_j \Pi,
\]
for all $j = 1, \ldots, n$, and
\[
\mbox{ran~} \Pi = \clh.
\]
\end{Theorem}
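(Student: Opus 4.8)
The plan is to assemble the pieces already developed in the discussion preceding the statement; the theorem has two components, purity of the restricted tuple $R = (R_1,\ldots,R_n)$ and the existence of the covering contraction $\Pi$, and each is obtained by the routine above once the right intertwiner is in place.

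For the first component, I would argue that $R$ is a pure row contraction directly from the relation $i_{\clh} R_j = T_j i_{\clh}$, which holds because $R_j = T_j|_{\clh}$. Iterating gives $i_{\clh} R^{\bm k} = T^{\bm k} i_{\clh}$ for all $\bm k \in \Z$, and taking adjoints and combining yields $i_{\clh} P_R^m(I_{\clh}) i_{\clh}^* = P_T^m(i_{\clh} i_{\clh}^*)$ for every $m \ge 0$. Since $i_{\clh} i_{\clh}^* \le I_{\clk}$ (the embedding is contractive) and $P_T^m$ is a positive map, the right side is dominated by $P_T^m(I_{\clk})$; pairing with $f \in \clk$ and using that $T$ is pure gives $\|P_R^m(I_{\clh})^{1/2} i_{\clh}^* f\|_{\clh} \to 0$ for all $f \in \clk$. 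The crucial step is that $i_{\clh}^*$ has dense range---because $i_{\clh}$ is one-to-one---so, combined with the uniform bound $\|P_R^m(I_{\clh})^{1/2}\| \le 1$, this upgrades to $P_R^m(I_{\clh})^{1/2} \to 0$ in the strong operator topology on all of $\clh$, and hence $P_R^m(I_{\clh}) \to 0$ strongly. This density-plus-uniform-boundedness passage is where the real (if modest) technical weight sits.

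With purity of $R$ in hand, I would invoke the Jewell--Lubin / M\"uller--Vasilescu dilation in the form of Theorem \ref{DA-pure} applied to $R$ on $\clh$: this furnishes a Hilbert space, namely the defect space $\cld_R$, together with a co-isometry $\Pi_R : H^2_n(\cld_R) \raro \clh$ satisfying $\Pi_R M_{z_j} = R_j \Pi_R$ for all $j$. I would then set $\cle_* := \cld_R$ and $\Pi := i_{\clh} \circ \Pi_R : H^2_n(\cle_*) \raro \clk$.

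It remains only to check the three asserted properties of $\Pi$. It is a contraction, being the composition of a co-isometry with the contractive inclusion $i_{\clh}$; its range equals $i_{\clh}(\mbox{ran~}\Pi_R) = i_{\clh}(\clh) = \clh$ since the co-isometry $\Pi_R$ is onto; and for each $j$ one has $\Pi M_{z_j} = i_{\clh} \Pi_R M_{z_j} = i_{\clh} R_j \Pi_R = T_j i_{\clh} \Pi_R = T_j \Pi$, using $i_{\clh} R_j = T_j i_{\clh}$. Thus the only genuinely nontrivial input is Theorem \ref{DA-pure}; the rest is bookkeeping with the intertwiner $i_{\clh}$, the main subtlety being the density/uniform-boundedness argument in the purity step.
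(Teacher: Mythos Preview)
Your proof is correct and follows essentially the same approach as the paper: the purity of $R$ is established via the intertwining relation $i_{\clh} R_j = T_j i_{\clh}$, the identity $i_{\clh} P_R^m(I_{\clh}) i_{\clh}^* = P_T^m(i_{\clh} i_{\clh}^*)$, and the density-plus-uniform-boundedness passage, after which Theorem~\ref{DA-pure} is applied to $R$ and $\Pi$ is defined as $i_{\clh}\circ\Pi_R$ with $\cle_* = \cld_R$. The verification of the three properties of $\Pi$ matches the paper's argument line for line.
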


Of particular interest is the case where $(T_1, \ldots, T_n)$ is the
$n$-tuple of multiplication operators on a Hilbert space of analytic
functions in the unit ball. To this end, we first need to introduce
analytic Hilbert spaces over $\mathbb{B}^n$ (see \cite{JS2} and
\cite{JS1} for more details).

Let $K : \mathbb{B}^n \times \mathbb{B}^n \raro \mathbb{C}$ be a
positive definite kernel such that $K(\z, \w)$ is holomorphic in the
$\{z_1, \ldots, z_n\}$ variables and anti-holomorphic in $\{w_1,
\ldots, w_n\}$ variables. Then the corresponding reproducing kernel
Hilbert space $\clh_K$ is a Hilbert space of holomorphic functions
in $\mathbb{B}^n$. We say that $\clh_K$ is an \textit{analytic
Hilbert space} if $(M_{z_1}, \ldots, M_{z_n})$, the $n$-tuple of
multiplication operators by the coordinate functions $\{z_1, \ldots,
z_n\}$, defines a pure row contraction on $\clh_K$. In other words,
$M_{z_j}$ on $\clh_K$ defined by
\[
(M_{z_j} f)(\w) = w_j f(\w) \quad \quad (f \in \clh_K, \w \in
\mathbb{B}^n),
\]
is bounded for all $j = 1, \ldots, n$, the commuting tuple $M_z =
(M_{z_1}, \ldots, M_{z_n})$ on $\clh_K$ satisfies the positivity
condition
\[
\sum_{i=1}^n M_{z_i} M_{z_i}^* \leq I_{\clh_K},
\]
and
\[
P_{\infty}(M_z) = 0.
\]

Let $\cle$ be a Hilbert space. Consider the $\cle$-valued
reproducing kernel Hilbert space $\clh_{K_\lambda} \otimes \cle$,
$\lambda \geq 1$, where $K_\lambda$ is defined as in \eqref{eq-K}.
Then the reproducing kernel Hilbert space $\clh_{K_\lambda}  \otimes
\cle$ is analytic, as is well-known and also follows, for example,
from Proposition 4.1 in \cite{JS2}. In particular, the vector-valued
Drury-Arveson space $H^2_n  \otimes \cle$, the Hardy space
$H^2(\mathbb{B}^n) \otimes \cle$, the Bergman space
$L^2_a(\mathbb{B}^n)  \otimes \cle$, and the vector-valued weighted
Bergman spaces $L^2_{a, \alpha}(\mathbb{B}^n)  \otimes \cle$, with
$\alpha > 0$, are analytic Hilbert spaces.

We finally recall a characterization of intertwining maps between
vector-valued Drury-Arveson space and analytic Hilbert spaces (cf.
Proposition 4.2 in \cite{JS2}). Let $\cle_1$ and $\cle_2$ be Hilbert
spaces, $\clh_{K}$ be an analytic Hilbert space and let $X \in
\clb(H^2_n \otimes \cle_1, \clh_{K} \otimes \cle_2)$. Then
\[
X(M_{z_i} \otimes I_{\cle_1}) = (M_{z_i} \otimes I_{\cle_2}) X,
\]
for all $i = 1, \ldots, n$, if and only if there exists a multipler
$\Theta \in \clm(H^2_n \otimes \cle_1, \clh_{K} \otimes \cle_2)$
such that
\begin{equation}\label{eq-mtheta}
X = M_{\Theta}.
\end{equation}
Recall that the multiplier space $\clm(H^2_n \otimes \cle_1,
\clh_{K} \otimes \cle_2)$ is the Banach space of all operator-valued
analytic functions $\Theta : \mathbb{B}^n \raro \clb(\cle_1,
\cle_2)$ such that
\[
\Theta f \in \clh_{K} \otimes \cle_2,
\]
for all $f \in H^2_n \otimes \cle_1$. Note that if $\Theta \in
\clm(H^2_n \otimes \cle_1, \clh_{K} \otimes \cle_2)$, then the
multiplication operator $M_{\Theta}$ defined by
\[
(M_{\Theta} f)(\w) = \Theta(\w) f(\w),
\]
for all $f \in H^2_n \otimes \cle_1$ and $\w \in \mathbb{B}^n$, is a
bounded linear operator (by the closed graph theorem) from $H^2_n
\otimes \cle_1$ to $\clh_{K} \otimes \cle_2$ (cf. \cite{GRS},
\cite{MT} and \cite{JS2}). The next corollary now follows directly
from Theorem \ref{th:gen-inc}.

\begin{Theorem}\label{th-MT2}
Let $\cle_*$ be a Hilbert space, and let $\clh_K$ be an analytic
Hilbert space. Let $\cls$ be a Hilbert space that is contractively
embedded in $\clh_K \otimes \cle_*$. Let $M_{z_j} \cls \subseteq
\cls$ and
\[
R_j = M_{z_j}|_{\cls},
\]
for all $j = 1, \ldots, n$, and suppose that $(R_1, \ldots, R_n)$ is
a row contraction on $\cls$. Then $(R_1, \ldots, R_n)$ is a pure row
contraction and there exist a Hilbert space $\cle$ and a contractive
multiplier $\Theta \in \clm(H^2_n \otimes \cle, \clh_{K} \otimes
\cle_*)$ such that
\[
\cls = \Theta H^2_n(\cle).
\]
\end{Theorem}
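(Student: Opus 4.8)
The plan is to derive Theorem \ref{th-MT2} directly from Theorem \ref{th:gen-inc} by specializing $\clk = \clh_K \otimes \cle_*$ and $(T_1, \ldots, T_n) = (M_{z_1} \otimes I_{\cle_*}, \ldots, M_{z_n} \otimes I_{\cle_*})$, which is a pure row contraction precisely because $\clh_K$ is an analytic Hilbert space (so that $P_\infty(M_z) = 0$ on $\clh_K$, and this passes to the tensor product with $\cle_*$). Applying Theorem \ref{th:gen-inc} to the contractively embedded invariant subspace $\cls$ yields a Hilbert space $\cle$ and a contraction $\Pi : H^2_n(\cle) \raro \clh_K \otimes \cle_*$ with $\Pi(M_{z_j} \otimes I_{\cle}) = (M_{z_j} \otimes I_{\cle_*}) \Pi$ for all $j$ and $\operatorname{ran} \Pi = \cls$.

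The next step is to identify $\Pi$ as a multiplication operator. Since $\Pi \in \clb(H^2_n \otimes \cle, \clh_K \otimes \cle_*)$ intertwines the coordinate shifts, the characterization recalled just before the theorem (the vector-valued analogue of Proposition 4.2 in \cite{JS2}) gives a multiplier $\Theta \in \clm(H^2_n \otimes \cle, \clh_K \otimes \cle_*)$ with $\Pi = M_\Theta$. Because $\Pi$ is a contraction, $\Theta$ is a contractive multiplier. Then $\cls = \operatorname{ran} \Pi = M_\Theta(H^2_n \otimes \cle) = \Theta H^2_n(\cle)$, which is the asserted representation; the purity of $(R_1, \ldots, R_n)$ is already part of the conclusion of Theorem \ref{th:gen-inc}.

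The one point requiring a small argument is that the Hilbert-space norm on $\cls$ (as a contractively embedded subspace) is the range norm transported from $H^2_n(\cle)$ through $M_\Theta$, i.e. that $M_\Theta$ restricts to a coisometry onto $\cls$ with its given norm; this is exactly how $\Pi = i_\cls \circ \Pi_R$ was built in the proof of Theorem \ref{th:gen-inc}, with $\Pi_R$ the Jewell--Lubin/M\"uller--Vasilescu dilation coisometry of the pure row contraction $(R_1, \ldots, R_n)$. So the representation $\cls = \Theta H^2_n(\cle)$ should be read with the range norm, consistent with the one-variable de Branges statement in the introduction. I expect no serious obstacle here: the only thing to be careful about is verifying that $M_z \otimes I_{\cle_*}$ on $\clh_K \otimes \cle_*$ is genuinely pure — which follows since $P_\infty$ commutes with tensoring by the identity on $\cle_*$ — and that the intertwiner characterization is being applied with the correct vector-valued target space $\clh_K \otimes \cle_*$ in place of $\clh_K \otimes \cle_2$.
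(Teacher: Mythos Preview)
Your proposal is correct and follows essentially the same approach as the paper: the paper states that Theorem~\ref{th-MT2} ``follows directly from Theorem~\ref{th:gen-inc}'' together with the intertwiner-as-multiplier characterization recalled in \eqref{eq-mtheta}, which is precisely the two-step argument you outline. Your additional remarks on purity of $M_z \otimes I_{\cle_*}$ and on the range norm are sound elaborations of points the paper leaves implicit.
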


In the case when $\clh_K$ is the Drury-Arveson space $H^2_n$, see
the early results in Benhida and Timotin (Theorem 4.2 \cite{BT}). In
this context we also refer to McCullough and Trent \cite{MT} and
Greene, Richter and Sundberg \cite{GRS}.

We would like to point out that the theory of contractively embedded
backward shift invariant subspaces in reproducing kernel Hilbert
spaces and the de Branges-Rovnyak models, in the setting of row
contractions, are closely related to the Gleason's problem
\cite{AD2}. In this context, the reader should consult the papers by
Alpay and Dubi \cite{AD1}, Ball and Bolotnikov \cite{BB10}, Ball,
Bolotnikov and Fang \cite{BBF2, BBF}, Ball, Bolotnikov and ter Horst
\cite{BBS1, BBS}, Benhida and Timotin \cite{BT} and Martin and
Ramanantoanina \cite{Ma}.

\newsection{Hardy space over the polydisc}

Let $n$ be a natural number. Given a Hilbert space $\cle$, we denote
by $H^2_{\cle}(\D^{n+1})$ the $\cle$-valued Hardy space over the
polydisc $\D^{n+1}$. In this section we aim to analyze the structure
of contractively embedded invariant subspaces for the multiplication
tuple on $H^2_{\cle}(\D^{n+1})$. The principle of our method is
based on the idea \cite{MMSS} that one can represent the tuple of
shifts on the Hardy space over $\D^{n+1}$ by a natural $(n+1)$-tuple
of multiplication operators on a vector-valued Hardy space over the
unit disc. This is the main content of the following theorem (see
Theorem 3.1 in \cite{MMSS}).

\begin{Theorem}\label{thm-old1}
Let $n$ be a natural number, and let $\cle$ be a Hilbert space. Let
\[
\cle_n = H^2_{\cle}(\D^n).
\]
For each $i = 1, \ldots, n$, let $\kappa_i \in
H^\infty_{\clb(\cle_n)}(\D)$ denote the $\clb(\cle_n)$-valued
constant function on $\D$ defined by
\[
\kappa_i(w) = M_{z_i} \in \clb(\cle_n),
\]
for all $w \in \D$, and let $M_{\kappa_i}$ denote the multiplication
operator on $H^2_{\cle_n}(\D)$ defined by
\[
M_{\kappa_i} f = \kappa_i f,
\]
for all $f \in H^2_{H_n}(\D)$. Then the $(n+1)$-tuples $(M_{z_1},
M_{z_2} \ldots, M_{z_{n+1}})$ and $(M_z, M_{\kappa_1}, \ldots,
M_{\kappa_n})$ are unitarily equivalent.
\end{Theorem}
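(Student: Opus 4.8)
The plan is to realize the unitary equivalence by writing down an explicit unitary operator $U : H^2_{\cle}(\D^{n+1}) \raro H^2_{\cle_n}(\D)$ and checking the intertwining relations on a dense set, namely on vector-valued monomials. The starting point is the natural identification of Hilbert spaces
\[
H^2_{\cle}(\D^{n+1}) \;\cong\; H^2_{\cle_n}(\D),
\]
which comes from regrouping a power series in $\z = (z_1, \ldots, z_{n+1})$ according to the last variable: writing $z_{n+1}$ for the ``outer'' variable and $(z_1, \ldots, z_n)$ for the inner ones, a function $f(z_1,\dots,z_{n+1}) = \sum_{k \geq 0} f_k(z_1,\dots,z_n)\, z_{n+1}^k$ with $f_k \in \cle_n = H^2_{\cle}(\D^n)$ corresponds to the $\cle_n$-valued function $w \mapsto \sum_{k\geq 0} f_k\, w^k$ on $\D$. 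That this is an isometric bijection is exactly the statement that the orthonormal basis $\{\z^{\bm{j}} \otimes e : \bm{j} \in \mathbb{Z}_+^{n+1}, e \in (\text{basis of }\cle)\}$ of $H^2_{\cle}(\D^{n+1})$ is carried bijectively and isometrically onto the corresponding orthonormal basis of $H^2_{\cle_n}(\D)$; this I would record as the first step, perhaps only sketched since it is the standard Fubini-type identification for Hardy spaces over polydiscs.

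The second step is to verify the two families of intertwining identities $U M_{z_{n+1}} = M_z U$ and $U M_{z_i} = M_{\kappa_i} U$ for $i = 1, \ldots, n$. For the ``outer'' variable this is immediate from the construction: under the regrouping, multiplication by $z_{n+1}$ on $H^2_{\cle}(\D^{n+1})$ shifts the coefficient sequence $(f_0, f_1, \dots)$ to $(0, f_0, f_1, \dots)$, which is precisely the action of $M_z$ on $H^2_{\cle_n}(\D)$. For the ``inner'' variables $z_i$ ($i \leq n$), multiplication by $z_i$ on $H^2_{\cle}(\D^{n+1})$ acts coefficientwise, sending each $f_k \in \cle_n = H^2_{\cle}(\D^n)$ to $M_{z_i} f_k$; on the $\cle_n$-valued side this is exactly multiplication by the constant $\clb(\cle_n)$-valued function $\kappa_i \equiv M_{z_i}$, i.e.\ the operator $M_{\kappa_i}$. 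Once these identities are checked on monomials they extend by linearity and continuity to all of $H^2_{\cle}(\D^{n+1})$.

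I would then assemble the pieces: $U$ is unitary by Step~1, and by Step~2 it intertwines $(M_{z_1}, \ldots, M_{z_{n+1}})$ on $H^2_{\cle}(\D^{n+1})$ with $(M_z, M_{\kappa_1}, \ldots, M_{\kappa_n})$ on $H^2_{\cle_n}(\D)$ (after relabeling so that $M_z$, the shift in the disc variable, plays the role attached to $z_{n+1}$ and $M_{\kappa_i}$ the role attached to $z_i$); this gives the asserted unitary equivalence of $(n+1)$-tuples. There is no real obstacle here --- the result is essentially bookkeeping about power series --- so the only thing demanding care is keeping the indexing straight: which variable of $\D^{n+1}$ is ``unbundled'' into the disc $\D$ of $H^2_{\cle_n}(\D)$ and which $n$ variables are absorbed into the coefficient space $\cle_n = H^2_{\cle}(\D^n)$, and correspondingly matching $M_z \leftrightarrow M_{z_{n+1}}$ and $M_{\kappa_i} \leftrightarrow M_{z_i}$. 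One should also note that each $M_{\kappa_i}$ is bounded because $\kappa_i$ is a bounded (indeed constant) $\clb(\cle_n)$-valued analytic function on $\D$, with $\|\kappa_i\|_\infty = \|M_{z_i}\|_{\clb(\cle_n)} \leq 1$, so $M_{\kappa_i}$ is well defined as a genuine multiplication operator, and the tuple $(M_z, M_{\kappa_1}, \ldots, M_{\kappa_n})$ is commuting --- which is automatic from the unitary equivalence but can also be seen directly since the $\kappa_i$ are constant and the $M_{z_i}$ commute on $\cle_n$.
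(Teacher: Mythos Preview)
Your proposal is correct and follows essentially the same approach as the paper's proof: define an explicit unitary on vector-valued monomials via the Fubini-type identification $H^2_{\cle}(\D^{n+1}) \cong H^2_{\cle_n}(\D)$ and verify the intertwining relations on this dense set. The only discrepancy is cosmetic --- the paper unbundles the \emph{first} variable $z_1$ (so that $U M_{z_1} = M_z U$ and $U M_{z_{i}} = M_{\kappa_{i-1}} U$ for $i \geq 2$), which matches the stated ordering of the two tuples componentwise, whereas you unbundle $z_{n+1}$; since $H^2_{\cle}(\D^{n+1})$ is symmetric in the coordinates this is harmless, but to match the statement literally you should peel off $z_1$ rather than $z_{n+1}$.
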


\begin{proof}
We briefly sketch only the main ideas behind the proof and refer the
reader to the proof of Theorem 3.1 in \cite{MMSS} for details. Since
the linear spans of
\[
\{z_1^{k_1} z_2^{k_2} \cdots z_{n+1}^{k_{n+1}} \eta: k_1, \ldots,
k_{n+1} \geq 0, \eta \in \cle\} \subseteq H^2_{\cle}(\D^{n+1}),
\]
and
\[
\{z^{k} (z_1^{k_1} \cdots z_{n}^{k_{n}} \eta): k, k_1, \ldots, k_{n}
\geq 0, \eta \in \cle\} \subseteq H^2_{\cle_n}(\D),
\]
are dense in $H^2_{\cle}(\D^{n+1})$ and $H^2_{\cle_n}(\D)$,
respectively, it follows that the map  ${U} : H^2_{\cle}(\D^{n+1})
\raro H^2_{\cle_n}(\D)$ defined by
\[
{U}(z_1^{k_1} z_2^{k_2} \cdots z_{n+1}^{k_{n+1}} \eta) = z^{k_1}
(z_1^{k_2} \cdots z_{n}^{k_{n+1}} \eta),
\]
for all $k_1, \ldots, k_{n+1} \geq 0$ and $\eta \in \cle$, is a
unitary operator. Clearly
\[
U M_{z_1} = M_z U,
\]
and an easy computation yields
\[
U M_{z_{i}} = M_{\kappa_{i-1}} U,
\]
for all $i = 2, \ldots, n$. This completes the proof.
\end{proof}

In view of the above theorem, we can now consider the problem of
contractively embedded invariant subspaces for the tuple $(M_z,
M_{\kappa_1}, \ldots, M_{\kappa_n})$ on $H^2_{\cle_n}(\D)$ instead
of the tuple of multiplication operators $(M_{z_1}, M_{z_2} \ldots,
M_{z_{n+1}})$ on the vector-valued Hardy space
$H^2_{\cle}(\D^{n+1})$.

Before we proceed to the main result of this section, we need one
more result concerning representations of commutators of shift
\cite{H} operators. Here our approach follows that of \cite{MMSS}
and \cite{MSS}. Recall that an isometry $V$ on a Hilbert space
$\clh$ is said to be a \textit{shift} if
\[
SOT - \lim_{m \raro \infty} V^{*m} = 0,
\]
that is, $\|V^{*m} f \| \raro 0$ as $m \raro \infty$ for all $f \in
\clh$, or equivalently, if there is no non trivial reducing subspace
of $\clh$ on which $V$ is unitary. Now, if $V$ is a shift on $\clh$,
then
\[
\clh = \mathop{\oplus}_{m=0}^\infty V^m \clw,
\]
where $\clw = \ker V^* = \clh \ominus V \clh$ is the wandering
subspace for $V$. By the above decomposition of $\clh$, we see that
the map $\Pi_V : \clh \raro H^2_{\clw}(\D)$ defined by
\[
\Pi_V (V^m \eta) = z^m \eta,
\]
for all $m \geq 0$ and $\eta\in \clw$, is a unitary operator and
\[
\Pi_V V = M_z \Pi_V.
\]
Following Wold and von Neumann, we call $\Pi_V$ the \textit{Wold-von
Neumann decomposition} of the shift $V$ (see \cite{MMSS} and
\cite{MSS}).

This point of view is very useful in representing the commutators of
shifts (see Theorem 2.1 in \cite{MSS} and Theorem 2.1 in
\cite{MMSS}):

\begin{Theorem}\label{thm-old2}(Theorem 2.1 in \cite{MSS})
Let $\clh$ be a Hilbert space. Let $V$ be a shift on $\clh$, and let
$C$ be a bounded operator on $\clh$. Let $\Pi_V$ be the Wold-von
Neumann decomposition of $V$, $M = \Pi_V C \Pi^{*}_V$, and let
\[
\Theta(w)= P_{\clw}(I_{\clh} - w V^{*})^{-1}C\mid_{\clw},
\]
for all $w \in \D$. Then $C V = VC$ if and only if $\Theta \in
H^\infty_{\clb(\clw)}(\D)$ and
\[
M = M_{\Theta}.
\]
\end{Theorem}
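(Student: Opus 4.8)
The plan is to transport the entire statement through the unitary $\Pi_V$ and reduce it to the classical description of the commutant of the shift $M_z$ on the vector-valued Hardy space $H^2_{\clw}(\D)$. Recall that $\Pi_V V = M_z \Pi_V$, so $\Pi_V V \Pi_V^* = M_z$, and by definition $M = \Pi_V C \Pi_V^*$. For the backward implication I would just observe that every multiplication operator on $H^2_{\clw}(\D)$ commutes with $M_z$; hence if $M = M_\Theta$ with $\Theta \in H^\infty_{\clb(\clw)}(\D)$, then $M M_z = M_z M$, and conjugating by $\Pi_V$ turns this into $CV = VC$.

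For the forward implication, assume $CV = VC$. Then $M = \Pi_V C \Pi_V^*$ is a bounded operator on $H^2_{\clw}(\D)$ with $M M_z = M_z M$. First I would invoke the standard fact that a bounded operator on $H^2_{\clw}(\D)$ commuting with $M_z$ is necessarily of the form $M_\Phi$ for a unique multiplier $\Phi \in H^\infty_{\clb(\clw)}(\D)$ with $\|\Phi\|_\infty = \|M\|$; this gives $M = M_\Phi$, and it then remains to identify $\Phi$ with $\Theta$. I would do this by comparing Taylor coefficients: writing $\Phi(w) = \sum_{m \ge 0} w^m \Phi_m$ with $\Phi_m \in \clb(\clw)$, one has for $\eta \in \clw$ that $\Phi_m \eta = P_{\clw}(M_z^*)^m M \eta$, where $P_{\clw}$ here denotes the projection of $H^2_{\clw}(\D)$ onto its space of constants. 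Using the intertwining identities $(M_z^*)^m \Pi_V = \Pi_V V^{*m}$, the fact that $\Pi_V$ carries the wandering subspace $\clw$ of $V$ identically onto the constants (so $P_{\clw}\Pi_V = \Pi_V P_{\clw}$ and $\Pi_V^*|_{\clw}$ is the inclusion $\clw \hookrightarrow \clh$), this collapses to $\Phi_m \eta = P_{\clw} V^{*m} C \eta$. Since $V$ is an isometry, the Neumann series $(I_\clh - w V^*)^{-1} = \sum_{m \ge 0} w^m V^{*m}$ converges for $|w| < 1$, whence $\Phi(w) = \sum_{m} w^m P_{\clw} V^{*m} C|_{\clw} = P_{\clw}(I_\clh - w V^*)^{-1} C|_{\clw} = \Theta(w)$. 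This proves $\Theta = \Phi \in H^\infty_{\clb(\clw)}(\D)$ and $M = M_\Theta$.

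As an alternative one can bypass the commutant theorem and argue directly: from $CV = VC$ one computes $\Pi_V C \Pi_V^*(z^k \eta) = M_z^k\,\Pi_V C\eta$ for $\eta \in \clw$ and $k \ge 0$, so that $M$ is determined on the dense linear span of $\{z^k\eta\}$ by the single analytic function $w \mapsto (\Pi_V C\eta)(w)$, whose $m$-th Taylor coefficient is $P_{\clw} V^{*m} C\eta$ exactly as above; hence this function equals $w \mapsto \Theta(w)\eta$, and the boundedness of $M$ then forces $\Theta$ to be a bounded $\clb(\clw)$-valued multiplier with $M = M_\Theta$.

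The only point beyond routine bookkeeping is the commutant description for $M_z$ on $H^2_{\clw}(\D)$ — equivalently, the identification of the bounded multipliers of a vector-valued Hardy space with the bounded $\clb(\clw)$-valued holomorphic functions on $\D$, together with the multiplier-norm equals supremum-norm statement used to conclude $\Theta \in H^\infty_{\clb(\clw)}(\D)$. Everything else is a careful but mechanical juggling of the intertwining relations satisfied by the Wold--von Neumann decomposition $\Pi_V$.
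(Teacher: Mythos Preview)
Your proof is correct and follows essentially the same route as the paper: both directions are handled identically, and for the forward implication you, like the paper, first invoke the commutant description of $M_z$ on $H^2_{\clw}(\D)$ to get $M=M_\Phi$ for some $\Phi\in H^\infty_{\clb(\clw)}(\D)$ and then identify $\Phi$ with $\Theta$ by reading off Taylor coefficients as $P_{\clw}V^{*m}C|_{\clw}$. The only cosmetic difference is that the paper extracts these coefficients by expanding $C\eta$ via the Wold identity $I_{\clh}=\sum_{m\ge 0}V^mP_{\clw}V^{*m}$ and then applying $\Pi_V$, whereas you use the formula $\Phi_m\eta=P_{\clw}(M_z^*)^m M\eta$ and the intertwining $(M_z^*)^m\Pi_V=\Pi_V V^{*m}$; these are two phrasings of the same computation.
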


Since $\|w V^*\| = |w| \|V\| < 1$ for all $w \in \D$, it follows
that, given a bounded operator $C$ on $\clw$, the function $\Theta$
as defined above is a $\clb(\clw)$-valued analytic function on $\D$.
It is however not clear that $\Theta$ is a bounded function on $\D$,
that is, $\Theta \in H^\infty_{\clb(\clw)}(\D)$. The above theorem
says that this is so if and only if $C$ is in the commutator of $V$.

\NI\textit{Proof of Theorem \ref{thm-old2}:} Again we will only
sketch the proof and refer the reader to Theorem 2.1 in \cite{MSS}
for a more rigorous proof. Certainly, the sufficient part follows
from the representation of $C$ (as $\Pi_V^* M_{\Theta} \Pi_V = C$)
and the fact that $M_z M_{\Theta} = M_{\Theta} M_z$. The proof for
the necessary part relies on the fact that (cf. \cite{MSS})
\[
I_{\clh} = \sum_{m=0}^\infty V^m P_{\clw} V^{*m},
\]
in the strong operator topology. Indeed, if $CV = VC$, then $M M_z =
M_z M$, and so
\[
M = M_{\Theta},
\]
for some bounded analytic function $\Theta \in
H^{\infty}_{\clb(\clw)}(\D)$ (see, for instance, the equality in
\eqref{eq-mtheta}). Let $w \in \D$ and $\eta \in \clw$. Then
\[
\Theta(w) \eta  = (M_{\Theta} \eta)(w) = (\Pi_V C \Pi_V^* \eta)(w).
\]
Since $\Pi_V^* \eta = \eta$ and
\[
C \eta = \sum_{m=0}^{\infty} V^m P_{\clw} V^{*m} C \eta,
\]
it follows that
\[
\begin{split}
\Theta(w) \eta & = (\Pi_V C \eta)(w)
\\
& = (\Pi_V (\sum_{m=0}^{\infty} V^m P_{\clw} V^{*m} C \eta))(w)
\\
& = (\sum_{m=0}^{\infty} M_z^m (P_{\clw} V^{*m} C \eta))(w).
\end{split}
\]
Finally, note that $P_{\clw} V^{*m} C \eta \in \clw$ for all $m \geq
0$, and hence
\[
\Theta(w) \eta = \sum_{m=0}^{\infty} w^m (P_{\clw} V^{*m} C \eta),
\]
from which the result follows.
\qed

We are now ready for the main result concerning contractively
embedded invariant subspaces of vector-valued Hardy spaces.

Let $n$ be a natural number, and let $\cle$ be a Hilbert space. Let
$\cls$ be a Hilbert space that is contractively embedded in
$H^2_{\cle_n}(\D)$. Let
\[
z \cls \subseteq \cls,
\]
and
\[
\kappa_i \cls \subseteq \cls,
\]
for all $i = 1, \ldots, n$. Assume that $(R, R_1, \ldots, R_n)$ is
an $(n+1)$-tuple of isometries on $\cls$, where
\[
R = M_z|_{\cls},
\]
and
\[
R_i = M_{\kappa_i}|_{\cls},
\]
for all $i = 1, \ldots, n$. We have
\[
\begin{split}
\mathop{\cap}_{m  =0}^\infty R^m \cls & = \mathop{\cap}_{m=0}^\infty
z^m \cls \subseteq \mathop{\cap}_{m=0}^\infty z^m H^2_{\cle_n}(\D).
\end{split}
\]
But $M_z$ on $H^2_{\cle_n}(\D)$ is a pure isometry (shift), that is,
\[
\mathop{\cap}_{m=0}^\infty z^m H^2_{\cle_n}(\D) = \{0\},
\]
and so it follows that
\[
\mathop{\cap}_{m=0}^\infty R^m \cls = \{0\}.
\]
Further, since $M_{\kappa_j}$ is a shift, it follows that
\[
\mathop{\cap}_{m=0}^\infty \kappa_j^m H^2_{\cle_n}(\D) = \{0\},
\]
and so
\[
\mathop{\cap}_{m=0}^\infty R_j^m \cls = \{0\},
\]
for all $j = 1, \ldots, n$, follows in a similar way. In other
words, $(R, R_1, \ldots, R_n)$ is an $(n+1)$-tuple of commuting
shifts on $\cls$. Now we argue essentially as in the proof of
Theorem 3.2 in \cite{MMSS}. Let
\[
\clw = \cls \ominus z \cls,
\]
and let $\Pi_R : \cls \raro H^2_{\clw}(\D)$ be the Wold-von Neumann
decomposition of $R$ on $\cls$. In particular, we have
\begin{equation}\label{eq=R}
R \Pi_R^* = \Pi_R^* M_z.
\end{equation}
Moreover, since $R R_j = R_j R$, applying Theorem \ref{thm-old2}, we
have
\begin{equation}\label{eq=Rj}
\Pi_R R_j = M_{\Phi_j} \Pi_R,
\end{equation}
where the $\clb(\clw)$-valued analytic function defined by
\[
\Phi_j(w) = P_{\clw} (I_{\cls} - P_{\cls} M_z^*)^{-1}
M_{\kappa_j}|_{\clw},
\]
for all $w \in \D$, is in $H^\infty_{\clb(\clw)}(\D)$ and $j = 1,
\ldots, n$. Now consider the (contractive) inclusion map $i_{\cls} :
\cls \hookrightarrow H^2_{\cle_n}(\D)$. Set
\[
\Pi = i_{\cls} \circ \Pi_R^*.
\]
Then $\Pi : H^2_{\clw}(\D) \raro H^2_{\cle_n}(\D)$ is a contraction.
Moreover, since
\[
i_{\cls} R = M_z i_{\cls},
\]
and
\[
i_{\cls} R_j = M_{\kappa_j} i_{\cls},
\]
it follows from \eqref{eq=R} and \eqref{eq=Rj} that
\begin{equation}\label{eq=Pi}
\Pi M_z = M_z \Pi,
\end{equation}
and
\begin{equation}\label{eq=Pij}
\Pi M_{\Phi_j} = M_{\kappa_j} \Pi,
\end{equation}
for all $j = 1, \ldots, n$. Then using \eqref{eq-mtheta}, one sees
that
\[
\Pi = M_{\Theta},
\]
for some contractive multiplier $\Theta \in H^\infty_{\clb(\clw,
\cle_n)}(\D)$, from \eqref{eq=Pi}, and hence
\[
\Theta \Phi_j = \kappa_j \Theta,
\]
from \eqref{eq=Pij}, for all $j = 1, \ldots, n$. Since $\text{ran~}
i_{\cls} = \cls$, it follows from the definition of $\Pi$ that
\[
\cls = \Theta H^2_{\clw}(\D).
\]
We can therefore state the following analogue of the de Branges
theorem in the setting of Hardy space over the unit polydisc:

\begin{Theorem}\label{th-MT3}
Let $n$ be a natural number, and let $\cle$ be a Hilbert space. Let
$\cls$ be a Hilbert space that is contractively embedded in
$H^2_{\cle_n}(\D)$. Let $z \cls \subseteq \cls$ and
\[
R = M_z|_{\cls}.
\]
For each $j = 1, \ldots, n$, let $\kappa_{j} \cls \subseteq \cls$
and
\[
R_j = M_{\kappa_j}|_{\cls}.
\]
Set $\clw = \cls \ominus z \cls$ and
\[
\Phi_j(w) = P_{\clw}(I_{\cls} - w P_{\cls} M_z^*)^{-1}
M_{\kappa_j}|_{\clw},
\]
for all $w \in \D$ and $j = 1, \ldots, n$. If $(R, R_1, \ldots,
R_n)$ is an $(n+1)$-tuple of commuting isometries on $\cls$, then
$(M_{\Phi_1}, \ldots, M_{\Phi_n})$ is an $n$-tuple of commuting
shifts on $H^2_{\clw}(\D)$ and there exists a contractive multiplier
$\Theta \in H^\infty_{\clb(\clw, \cle_n)}(\D)$ such that
\[
\cls = \Theta H^2_{\clw}(\D),
\]
and
\[
\kappa_j \Theta = \Theta \Phi_j,
\]
for all $j = 1, \ldots, n$.
\end{Theorem}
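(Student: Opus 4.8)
The plan is to push the whole picture, via the Wold--von Neumann decomposition of the shift $R$, onto a vector-valued Hardy space over the disc --- where every operator commuting with the coordinate shift is a multiplication operator --- and then to extract $\Theta$ from such a representation. The first step is to strengthen the hypothesis that $(R, R_1, \dots, R_n)$ is a tuple of commuting isometries on $\cls$ to the assertion that it is a tuple of commuting \emph{shifts}. Commutativity descends from $M_z, M_{\kappa_1}, \dots, M_{\kappa_n}$, which commute on $H^2_{\cle_n}(\D)$ (by Theorem \ref{thm-old1}). For purity, observe that $R^m \cls \subseteq z^m H^2_{\cle_n}(\D)$ and $R_j^m \cls \subseteq \kappa_j^m H^2_{\cle_n}(\D)$ for every $m \geq 0$, while $M_z$ and each $M_{\kappa_j}$ are pure isometries on $H^2_{\cle_n}(\D)$ --- $M_{\kappa_j}$ being, by Theorem \ref{thm-old1}, unitarily equivalent to $M_{z_{j+1}}$ on $H^2_{\cle}(\D^{n+1})$; hence $\bigcap_{m \geq 0} R^m \cls = \bigcap_{m \geq 0} R_j^m \cls = \{0\}$, so $R$ and each $R_j$ is a shift on $\cls$.

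Next, set $\clw = \cls \ominus z\cls = \ker R^*$ and let $\Pi_R : \cls \raro H^2_\clw(\D)$ be the Wold--von Neumann decomposition of $R$, so that $\Pi_R$ is unitary and $\Pi_R R = M_z \Pi_R$. Since $R R_j = R_j R$ and $R$ is a shift, Theorem \ref{thm-old2} applied with $V = R$ and $C = R_j$ gives $\Pi_R R_j \Pi_R^* = M_{\Phi_j}$, where $\Phi_j \in H^\infty_{\clb(\clw)}(\D)$ is $P_\clw (I_\cls - w R^*)^{-1} R_j|_\clw$; this is precisely the function displayed in the statement once $R^*$ is read as the adjoint of $M_z|_\cls$ taken inside $\cls$ (written suggestively as $P_\cls M_z^*$) and $R_j|_\clw$ as $M_{\kappa_j}|_\clw$. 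Because $\Pi_R$ is unitary and $(R_1, \dots, R_n)$ is a commuting tuple of shifts on $\cls$, the unitarily equivalent tuple $(M_{\Phi_1}, \dots, M_{\Phi_n})$ is a commuting tuple of shifts on $H^2_\clw(\D)$, which is the first conclusion.

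Finally, set $\Pi = i_\cls \circ \Pi_R^* : H^2_\clw(\D) \raro H^2_{\cle_n}(\D)$; it is an injective contraction, being the composite of the unitary $\Pi_R^*$ and the contractive inclusion $i_\cls$. From $i_\cls R = M_z i_\cls$, $i_\cls R_j = M_{\kappa_j} i_\cls$, $R \Pi_R^* = \Pi_R^* M_z$ and $R_j \Pi_R^* = \Pi_R^* M_{\Phi_j}$, a short computation yields $\Pi M_z = M_z \Pi$ and $\Pi M_{\Phi_j} = M_{\kappa_j} \Pi$. By \eqref{eq-mtheta}, applied to the Hardy space over $\D$ (an analytic Hilbert space), the first relation forces $\Pi = M_\Theta$ for some $\Theta \in H^\infty_{\clb(\clw, \cle_n)}(\D)$ with $\|\Theta\|_\infty = \|\Pi\| \leq 1$; comparing symbols in the second relation gives $\kappa_j \Theta = \Theta \Phi_j$ for all $j$. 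Since $\Pi_R^*$ maps $H^2_\clw(\D)$ onto $\cls$, we obtain $\Theta H^2_\clw(\D) = \text{ran}\, M_\Theta = \text{ran}(i_\cls \Pi_R^*) = \cls$ as sets, and the injectivity of $M_\Theta$ together with $\|M_\Theta f\|_\cls = \|\Pi_R^* f\|_\cls = \|f\|_{H^2_\clw(\D)}$ identifies the de Branges range norm on $\Theta H^2_\clw(\D)$ with the norm $\cls$ already carries.

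The argument is in the end an orchestration of Theorems \ref{thm-old1} and \ref{thm-old2} and the intertwining characterization \eqref{eq-mtheta}, so the difficulties are bookkeeping rather than conceptual. The point I expect to need most care is the treatment of $R^*$, which lives inside $\cls$ and \emph{not} inside $H^2_{\cle_n}(\D)$ (recall $\cls$ need not be closed there), when matching the output of Theorem \ref{thm-old2} with the stated formula for $\Phi_j$; and, in the same spirit, confirming that $\Pi_R^*$ carries the $H^2_\clw(\D)$ norm isometrically onto $\cls$, so that the range norm produced by $M_\Theta$ is genuinely the original norm on $\cls$.
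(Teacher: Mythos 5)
Your proposal is correct and follows essentially the same route as the paper: establish purity of $(R,R_1,\dots,R_n)$ from the purity of $M_z$ and the $M_{\kappa_j}$ on $H^2_{\cle_n}(\D)$, pass to $H^2_{\clw}(\D)$ via the Wold--von Neumann decomposition of $R$, represent the commuting $R_j$ as $M_{\Phi_j}$ by Theorem \ref{thm-old2}, and obtain $\Theta$ from the intertwiner $\Pi = i_{\cls}\circ \Pi_R^*$ using \eqref{eq-mtheta}. Your closing remarks on reading $R^*$ inside $\cls$ and on the range-norm identification are sensible refinements of points the paper leaves implicit.
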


The preceding result, in view of the de Branges and Rovnyak theory,
suggests a very interesting question: How can one characterize those
contractively embedded invariant subspace for $(M_z, M_{\kappa_1},
\ldots, M_{\kappa_n})$ on $H^2_{\cle_n}(\D)$ which are complemented
by invariant subspace for $(M_z^*, M_{\kappa_1}^*, \ldots,
M_{\kappa_n}^*)$ on $H^2_{\cle_n}(\D)$? The answer to this question
is not known.

\bigskip

\noindent\textbf{Acknowledgment:} The first named author's research
work is supported by an INSPIRE faculty fellowship (IFA-MA-02)
funded by DST. The second author is supported in part by NBHM
(National Board of Higher Mathematics, India) grant
NBHM/R.P.64/2014.

\bigskip

\end{document}